\newtheorem{theorem}{Theorem}%[section]
\theoremstyle{remark}
\newtheorem{remark}[theorem]{Remark}
\numberwithin{equation}{section}
\newcommand\mL{L\kern-0.08cm\char39}
\begin{document}

\begin{large}

\title[]{A note on cycles in graphs with specified radius and diameter}

%    Information for first author
\author[P. Hrn\v ciar]{Pavel Hrn\v ciar}
%    Address of record for the research reported here
\address{Department of Mathematics, Faculty of Natural Sciences,
          Matej Bel University, Tajovsk\'eho 40, 974 01 Bansk\'a Bystrica,
          Slovakia}
\email{Pavel.Hrnciar@umb.sk}
%    \thanks will become a 1st page footnote.

%\thanks{The author was supported by the Slovak Grant Agency under
%        the grant number VEGA 1/1085/11.}

%    General info
\subjclass[2010]{05C12}

\keywords{Radius, diameter, cycle, circumference, block}

\begin{abstract}
Let $G$ be a graph of radius $r$ and diameter $d$ with $d\leq 2r-2$. We give a new proof that $G$ contains a cycle of  length at least $4r-2d$, i.e. for its circumference it holds $c(G)\geq 4r-2d$.
\end{abstract}

\maketitle

%%%%%%%%%%%%%%%%%%%%%%%%%%%%%%%%%%%%%%%%%%%%%%%%%%%%%%%%%%%%%%%%%%%

%\section{Introduction and definition}\label{Se:intro}

For a connected graph $G$, the \emph{distance} $d(u,v)$ between vertices $u$ and $v$ is the length of a shortest path joining them. The distance between a vertex $u\in V(G)$ and a subgraph $H$ of $G$ will be denoted by $d(u,H)=\min\{d(u,v); v\in V(H)\}$.
The \emph{eccentricity} $e(u)$ of a vertex $u$ of $G$ is the distance of $u$ to a vertex farthest from $u$ in $G$, i.e. $e(u)=\max\{d(u,v); v\in V(G)\}$.
The \emph{radius} $r(G)$ and the \emph{diameter} $d(G)$ of $G$ are the minimum and the maximum eccentricity of its vertices, respectively.
The \emph{circumference} of a graph $G$, denoted $c(G)$, is the length of any longest cycle in $G$.

A nontrivial connected graph with no cut-vertices is called a \emph{nonseparable graph}. A \emph{block} of a graph $G$ is a maximal nonseparable subgraph of $G$. If $u$ is a cut-vertex of $G$ belonging to a block $B$ of $G$ then $V_{u,B}$ (or briefly $V_u$) will denote the set $\{v\in V(G); d(v,u)=d(v,B)\}$.

The aim of this note is to give a new proof of the following theorem (see~\cite{Hr}). This proof is very different from the previous one and it is also simpler.

\begin{theorem}
Let $G$ be a graph of radius $r$ and diameter $d$ with $d\leq 2r-2$. Then $c(G)\geq 4r-2d$.
\end{theorem}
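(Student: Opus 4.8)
The plan is to reduce everything to a single, well-chosen block and then apply a Menger-type cycle estimate. I will use three preliminary facts. First, if $x,y$ lie in a common block $B$ then $d(x,y)=d_B(x,y)$, since a shortest $x$--$y$ path never needs to leave $B$ (any excursion into another block re-enters through the same cut-vertex and can be deleted). Second, for a block $B$ and a vertex $u\in B$ put $\rho_u=\max\{d(v,u): v\in V_{u,B}\}$, with $\rho_u=0$ when $u$ is not a cut-vertex. Since every vertex $v$ reaches $B$ through a unique gateway $u\in B$ with $d(v,b)=d(v,u)+d_B(u,b)$ for all $b\in B$, one obtains the eccentricity formula $e(b)=\max_{u\in B}(\rho_u+d_B(u,b))$ for every $b\in B$. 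Third, in a $2$-connected graph two vertices at distance $t$ lie on a common cycle, which splits into two internally disjoint paths each of length at least $t$; hence that cycle has length at least $2t$, so it suffices to exhibit a block containing two vertices at distance at least $2r-d$.

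Next I fix a center $b_0$, so $e(b_0)=r$, together with a vertex $a_0$ at distance $r$ from $b_0$, and let $B$ be the block containing the first edge of a shortest $b_0$--$a_0$ path. Then $b_0\in B$, and the gateway $u_1\in B$ of $a_0$ satisfies $\rho_{u_1}+d_B(u_1,b_0)=e(b_0)=r$ with $u_1\neq b_0$ (if instead $a_0\in B$, then $d_B(b_0,a_0)=r$ outright and the final step below is immediate). Choosing $B$ to be the block through which the eccentricity of $b_0$ is actually realised is what forces $u_1\neq b_0$; this is the delicate point, since a priori the maximum in the eccentricity formula could be attained at $b_0$ itself.

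The heart of the argument is to bound the branch depth $\rho_{u_1}$. Because $u_1$ is a vertex of $G$ we have $e(u_1)\geq r$, and since $d_B(u_1,b_0)>0$ forces $\rho_{u_1}<r$, the eccentricity formula shows $e(u_1)$ is realised at some $u_3\neq u_1$, whence $d_B(u_3,u_1)\geq r-\rho_{u_3}$. Comparing the deepest vertices of the branches at $u_3$ and at $u_1$, whose distance equals $\rho_{u_3}+d_B(u_3,u_1)+\rho_{u_1}$ and is at most $d$, gives $d\geq \rho_{u_3}+(r-\rho_{u_3})+\rho_{u_1}=r+\rho_{u_1}$, i.e. $\rho_{u_1}\leq d-r$. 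Consequently $d_B(b_0,u_1)=r-\rho_{u_1}\geq 2r-d\geq 2$. In particular $B$ contains two vertices at distance at least $2$, so $B$ is $2$-connected, and by the cycle estimate it carries a cycle of length at least $2(2r-d)=4r-2d$; therefore $c(G)\geq 4r-2d$.

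I expect the main obstacle to be the bookkeeping of gateways and branch depths rather than the final inequality, which is short. Concretely, the care lies in (i) setting up the eccentricity formula and verifying $d(x,y)=d_B(x,y)$ inside a block, and (ii) choosing $B$ as the block realising $e(b_0)$ so that the realising vertex $u_1$ is distinct from $b_0$; once these are in place, the estimate $\rho_{u_1}\leq d-r$ and the identification of the far pair $(b_0,u_1)$ follow directly from $e(u_1)\geq r$ together with the diametral bound on the deepest branch vertices.
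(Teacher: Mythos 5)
Your proof is correct, and it takes a genuinely different route from the paper's. Both arguments rest on the same Menger-type fact --- in a nonseparable block, two vertices at distance $t$ lie on a common cycle, hence on a cycle of length at least $2t$ --- so both reduce the theorem to finding a block containing two vertices at distance at least $2r-d$. The paper does this by contradiction: assuming $c(G)<4r-2d$, every block $B\neq K_2$ satisfies $d(B)<2r-d$, and a contradiction is then extracted through a three-case analysis of how the deep branches (depth greater than $d-r$) attach to the blocks, with the hardest case anchored at a diametral pair. You argue directly: anchoring at a center $b_0$ and choosing $B$ as the block containing the first edge of a $b_0$--$a_0$ geodesic forces the gateway $u_1$ of $a_0$ to differ from $b_0$ (the tail of the geodesic reaches $B$ while avoiding $b_0$, so $b_0$ cannot be the gateway --- this is indeed the crux, as you note), giving $\rho_{u_1}+d_B(u_1,b_0)=r$ with $d_B(u_1,b_0)\geq 1$; then $e(u_1)\geq r$ together with the diameter bound on the deepest vertices of the branches at $u_1$ and $u_3$ yields $\rho_{u_1}\leq d-r$, hence $d_B(b_0,u_1)=r-\rho_{u_1}\geq 2r-d\geq 2$, which simultaneously rules out $B=K_2$ and produces the long cycle; the degenerate case $a_0\in V(B)$ is also handled correctly since $r\geq 2r-d$. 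Your gateway-uniqueness and eccentricity-formula facts are stated without proof, but they are standard and are exactly the distance-additivity identities the paper itself uses implicitly (e.g. $d(v,w)=d(v,u_1)+d(u_1,u_2)+d(u_2,w)$ in its case (2)), so the level of rigor is comparable. What each approach buys: yours is a direct, single-thread argument with no global case analysis, and it proves the slightly sharper structural statement that some block containing a central vertex has diameter at least $2r-d$; the paper's proof dispenses with the gateway and eccentricity machinery, at the price of the contradiction framework and the three cases.
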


\begin{proof}
Since $d\leq 2r-2$, $G$ is not a tree. Suppose, contrary to our claim, that $c(G)< 4r-2d$. If $B$ is a block of $G$, $B\neq K_2$, then every two vertices of $B$ lie on a common cycle (see~\cite{BH}, Theorem 1.6) of length less than $4r-2d$. Hence we get $d(B)< 2r-d$. We distinguish three cases.

\begin{itemize}
\item[(1)] 
There exists a block $B$ of $G$ with $d(v,B)\leq d-r$ for every vertex $v\in V(G)$. 
\newline
Let $w\in V(B)$. If $B=K_2$ then $e(w)\leq 1+(d-r)\leq 1+(r-2)<r$, a contradiction. If $B\neq K_2$ then $e(w)\leq d(B)+(d-r)<(2r-d)+(d-r)=r$, which is again a contradiction.

\item[(2)]
There exists a block $B$ of $G$ containing two cut-vertices $u_1$, $u_2$ with $\max\{d(v,u_i); v\in V_{u_i,B}\}> d-r$ for $i=1,2$.
\newline 
Let $v\in V(G)$ be a vertex with $d(v,u_1)=r$. We distinguish two subcases.
\newline
(i) $v\in V_{u_1}.$
\newline
Let $w\in V_{u_2}$ be a vertex with $d(w,u_2)> d-r$. We get $d(v,w)=d(v,u_1)+d(u_1,u_2)+d(u_2,w)>r+1+(d-r)>d$, a contradiction.
\newline
(ii) $v\notin V_{u_1}.$
\newline
Consider a vertex $w\in V_{u_1}$ with $d(w,u_1)>d-r$. We get $d(w,v)=d(w,u_1)+d(u_1,v)> (d-r)+r=d$, a contradiction too.
\item[(3)] 
In every block $B$ of $G$ there exists exactly one cut-vertex $u$ with $\max\{d(v,B); v\in V_{u,B}\}> d-r$. 
\\
Choose vertices $x$ and $y$ such that $d(x,y)=d(G)$. Let $P$ by a shortest path joining $x$ and $y$. Let $w$ be a vertex of $P$ with $|d(x,w)-d(y,w)|\leq 1$, i.e. $d(x,w)\leq r-1$ and $d(y,w)\leq r-1$. Consider  a block $B$ of $G$ such that $w\in V(B)$ and $|V(B)\cap V(P)|\geq 2$. Let $u\in V(B)$ be a vertex with $d(x,u)=d(x,B)$ and $v\in V(B)$ be a vertex with $d(y,v)=d(y,B)$. Obviously, $u\neq v$ and $u$, $v$ are vertices of $P$. Without loss of generality it suffices to distinguish two subcases.
\newline(i) $d(x,B)>d-r.$
\newline The vertex $u$ is a cut-vertex of $G$ and it holds $d(z,B)\leq d-r$ for every vertex $z\in V(G)- V_u$. If for every vertex $z\in V_u$ it holds $d(z,w)\leq r-1$ then it is easy to check (see the case (1)) that $e(w)<r$, a contradiction. If there exists a vertex $z\in V_u$ with $d(z,w)\geq r$ we have
$$
d(z,y)=d(z,w)+d(w,y)\geq r+d(w,y)> d(x,w)+d(w,y)= d(x,y)=d,
$$
again a contradiction.
\newline
(ii) $d(x,B)\leq d-r$ and $d(y,B)\leq d-r.$
\newline If $B=K_2$ then we have
$$ 
d(x,y)\leq 2(d-r)+1=d+1+(d-2r)\leq d+1+(-2)< d,
$$
a contradiction.
\newline If $B\neq K_2$ then we get 
$$
d(x,y)=d(x,B)+d(B)+d(y,B)< 2(d-r)+(2r-d)=d,
$$
a contradiction.
\end{itemize}
\end{proof}

\begin{remark}
 For $r\geq 3$ the bound $4r-2d$ in Theorem 1 is the best possible (see ~\cite{Hr}).
\end{remark}

\begin{remark}
 For a graph $G$ with radius $r$, diameter $d\leq 2r-2$, with at most $3r-2$ vertices, it holds $c(G)\geq 2r$ (see~\cite{Ha}).
\end{remark}

\end{large}

\end{document}